\newtheorem{theorem}{Theorem}[section]
\newtheorem{lemma}[theorem]{Lemma}
\newtheorem{claim}[theorem]{Claim}
\theoremstyle{definition}
\newtheorem{definition}[theorem]{Definition}
\newtheorem{remark}[theorem]{Remark}
\newtheorem{notation}[theorem]{Notation}
\numberwithin{equation}{section}
\newtheorem*{theorem*}{theorem}
{{\sc Proof of Lemma~\ref{tri1}.}}%
{{\qed} \\}
{{\sc Proof of Theorem~\ref{regularity}.}}%
{{\qed} \\}
\newenvironment{proof:main}%
{{\sc Proof of Theorem~\ref{theorem:mainexistence}.}}%
{{\qed} \\}
\newenvironment{proof:pluriharmonic}%
    {{\sc Proof of Theorem~\ref{theorem:pluriharmonic}.}}%
  {{\qed} \\}  
  \newenvironment{proofof(iii)}%
    {{\sc Proof of $(iii)$.}}%
  {{\qed} \\}  
\newcommand{\R}{\mathbb R}
\newcommand{\domain}{M}
\newcommand{\N}{\mathbb N}
\title[Uniqueness of equivariant harmonic maps]{Uniqueness of equivariant harmonic maps to symmetric spaces and buildings}
\author[Daskalopoulos]{Georgios Daskalopoulos}
\address{Department of Mathematics \\
                 Brown Univeristy \\
                 Providence, RI}
\email{daskal@math.brown.edu}
\author[Mese]{Chikako Mese}
\address{Johns Hopkins University\\
Department of Mathematics\\
Baltimore, MD}
\email{cmese@math.jhu.edu}
\begin{document}

\thanks{
GD supported in part by NSF DMS-2105226, CM supported in part by NSF DMS-2005406.}

\maketitle

\begin{abstract}
We prove  uniqueness of equivariant harmonic maps into  irreducible symmetric spaces of non-compact type and  Euclidean buildings associated to isometric actions by Zariski dense subgroups.  
\end{abstract}

\section{Introduction}

Assume that $M$ and $N$ are  Riemannian manifolds, $M$ has finite volume and  $N$ has non-positive sectional curvature. Hartman \cite{hartman} proved  the following uniqueness result for harmonic maps:  {\it Let $u:M \rightarrow N$ be a finite energy harmonic map of  rank greater at 1 at some point $p \in M$.  If   $N$ has negative sectional curvature at $u(p)$, then $u$ is the only harmonic map in its homotopy class} (cf.~\cite[Corollary following (H)]{hartman}). The second author \cite{mese} generalized Hartman's uniqueness result  to the case when the target space is a geodesic metric space $\tilde X$ with curvature $<0$ in the sense of Alexandrov.  On the other hand, if there exists a 2-plane in $T_{u(p)}N$ with sectional curvature  0 for all $p \in M$, then uniqueness  fails.  For example in the extreme case, when   $N$ is a flat torus, then there exists a family of harmonic maps obtained by translations of a given harmonic map.

Analogous uniqueness statements hold for equivariant harmonic maps.   More precisely, let $\rho:\pi_1(M) \rightarrow \mathsf{Isom}(\tilde X)$ be a homomorphism into the isometry group of an NPC space $\tilde X$ and $\tilde f$ be a $\rho$-equivariant map (cf.~Definition~\ref{def:equivariant}).
Using the same principle as in the homotopy problem,  a finite energy $\rho$-equivariant harmonic map $\tilde u:\tilde \domain \rightarrow \tilde X$ is unique provided $\tilde u$ has rank greater than 1 at some point $p \in \tilde \domain$ and $\tilde X$ has negative  curvature at $\tilde u(p)$.  

In this note, we study uniqueness for equivariant harmonic maps  into irreducible symmetric spaces of non-compact type  and  Euclidean buildings.   We will assume that  Euclidean buildings are locally finite simplicial complexes.  However, we  conjecture that  a similar uniqueness result holds in the case of 
non-locally finite thick Euclidean buildings (cf.~Remark~\ref{concludingremarks}).  The importance of the latter case is that limits of symmetric spaces of non-compact type  are thick Euclidean buildings (cf.~\cite{kleiner-leeb}). This is important in the study of the compactification of character varieties and higher Teichm\"uller theory.

 Symmetric spaces of non-compact type (resp.~Euclidean buildings)  are  examples of Riemannian manifolds of non-positive sectional curvature (resp. NPC spaces or complete CAT(0) spaces).   Harmonic maps  into Riemannian manifolds of non-positive sectional curvature and NPC spaces have been important in the study of geometric rigidity problems (e.g.~\cite{siu},  \cite{corlette}, \cite{gromov-schoen}, \cite{jost-yau}, \cite{mok-siu-yeung}, \cite{daskal-meseGAFA} among many others).  The uniqueness of harmonic maps into symmetric spaces  (resp.~Euclidean buildings) does not follow from  \cite{hartman} (resp.~\cite{mese}) unless $\tilde X$ has rank 1 (resp.~$\tilde X$ is a $\R$-tree). Indeed,  every point $P$ in a rank $n$ symmetric space $\tilde X$  (resp.~$n$-dimensional Euclidean building)
is contained in a convex, isometric embedding of $\R^n$. The novelty  of this paper is that the uniqueness is proven, not with  the assumption on the  curvature bound  as in \cite{hartman} and \cite{mese}, but with an assumption on the homomorphism $\rho:\pi_1(M) \rightarrow \mathsf{Isom}(\tilde X)$.

The main theorem of this paper is the following:

\begin{theorem}[Existence and Uniqueness] \label{existunique}
Let $M$ be a Riemannian manifold with finite volume, $\tilde X$ be an irreducible  symmetric space of non-compact type,  and $\rho:\pi_1(M) \rightarrow \mathsf{Isom}(\tilde X)$ a homomorphism.  Assume:
\begin{itemize}
\item[(i)]  The subgroup $\rho(\pi_1(M))$ does not fix a point at infinity.  
 \item[(ii)] There exists a finite energy $\rho$-equivariant map $\tilde f:\tilde \domain \rightarrow \tilde X$.
\end{itemize}
Then there exists a unique finite energy $\rho$-equivariant harmonic map $\tilde u: \tilde \domain \rightarrow \tilde X$.

The same conclusion holds if $\tilde X$ is a locally finite Euclidean building
with the additional assumption that the action of $\rho(\pi_1(M))$ does not fix a non-empty closed convex strict subset of $\tilde X$.
\end{theorem}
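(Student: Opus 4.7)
The plan is to separate existence and uniqueness, and in each case to use the non-fixing hypotheses to rule out the flat-direction obstructions that distinguish higher rank targets from the classical Hartman setting.

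For existence, I would carry out a direct minimization of the Korevaar-Schoen energy over the space of $\rho$-equivariant finite energy maps, with $\tilde f$ as the initial competitor guaranteeing that the infimum is finite. Along a minimizing sequence $\tilde f_n$, convexity of the energy along NPC geodesics gives the needed a priori control; the only obstruction to subsequential convergence to a harmonic limit is escape to infinity of the images at a single basepoint $x_0 \in \tilde \domain$. If such an escape were to occur, a standard argument extracts an asymptotic Busemann function whose super-level sets are $\rho$-invariant, producing either a fixed point at infinity of $\tilde X$ (contradicting (i) in the symmetric space case) or a proper closed convex $\rho$-invariant subset (contradicting the additional hypothesis in the building case).

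For uniqueness, suppose $\tilde u_0, \tilde u_1$ are two $\rho$-equivariant finite energy harmonic maps, and form the pointwise NPC-geodesic interpolation $\tilde u_t$, $t \in [0,1]$; each $\tilde u_t$ is again $\rho$-equivariant and harmonic by convexity of the energy along geodesics in an NPC target. The distance function $\varphi(x) = d(\tilde u_0(x), \tilde u_1(x))$ is subharmonic on $\tilde \domain$ and $\pi_1(M)$-invariant, so it descends to $M$; finite energy of $\tilde u_0, \tilde u_1$ forces $\varphi \in W^{1,2}(M)$, and combined with the finite volume of $M$ one concludes $\varphi \equiv c$ for some constant $c \geq 0$. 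If $c = 0$ then $\tilde u_0 = \tilde u_1$ and we are done; otherwise the segments $t \mapsto \tilde u_t(x)$ extend to a $\rho$-equivariant family of parallel geodesic lines in $\tilde X$.

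The main obstacle, and the crux of the argument, is to convert this parallel family into a $\rho$-invariant object forbidden by the hypotheses. In an irreducible symmetric space of non-compact type, any two parallel geodesic lines share the same pair of endpoints at infinity, since by the flat strip theorem their parallel set splits isometrically as $\R \times Y$; hence the family produces a well-defined pair $\{\xi^+, \xi^-\} \subset \partial_\infty \tilde X$ stabilized by $\rho(\pi_1(M))$, yielding a fixed point at infinity and contradicting (i). In the locally finite Euclidean building case, the same parallel set splitting yields a nonempty proper closed convex subset of $\tilde X$ (the cross-section $Y$, suitably realized) that is $\rho$-invariant, contradicting the additional hypothesis. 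I expect the technical heart of the proof to lie in rigorously extending the finite interpolation $t \in [0,1]$ to a genuine global parallel line field and in implementing the Busemann/parallel set machinery in the non-Riemannian building setting with the appropriate regularity of $\tilde u_0, \tilde u_1$.
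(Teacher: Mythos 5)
Your overall skeleton (interpolate, show the interpolation has constant energy and constant separation, produce an equivariant family of parallel geodesics, contradict the non-fixing hypotheses) is the paper's strategy, and your treatment of existence (direct minimization, with escape to infinity ruled out by (i)) is consistent with the references the paper cites. However, the uniqueness argument has a genuine gap precisely at the point you yourself flag as ``the technical heart'': you assert, but do not prove, that the segments $t\mapsto \tilde u_t(x)$ form a \emph{parallel} family and that they extend to parallel \emph{lines}. Neither is automatic. Constancy of $\varphi(x)=d(\tilde u_0(x),\tilde u_1(x))$ in $x$ says nothing about whether $t\mapsto d(\tilde u_t(p),\tilde u_t(q))$ is constant in $t$; the paper extracts this from the equality case in the convexity of energy, which forces the directional energy densities $|(\tilde u_t)_*V|^2$ to be independent of $t$, and then (for buildings) a second-derivative computation inside apartments on the Gromov--Schoen regular set, or (for symmetric spaces) the second variation formula, which yields that the variation field $V=dF(\partial_s)$ is parallel in the $\tilde M$-directions and annihilates the curvature term. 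Without some version of this, the ``parallel line field'' you want does not exist yet.

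The extension from segments to rays is also where the two target geometries genuinely diverge, and your proposed endgame does not match what can actually be carried out. In the symmetric space case the paper does not merely extend geodesics: it uses the parallelism of $d\tilde u_s(E_\alpha)$ along the variation together with Sampson's unique continuation theorem to identify $\tilde u_s=T_s\tilde u_0$ for a transvection $T_s$ along one interpolating geodesic, and iterating $T_s$ produces the equivariant family of parallel rays and hence a single fixed point at infinity (note the forward endpoint itself is fixed, since equivariance preserves the parameter $t$; your ``stabilized pair $\{\xi^+,\xi^-\}$'' is weaker and would still require ruling out a swap). In the building case geodesic extension is non-unique, so the flat strip / parallel set splitting you invoke cannot be applied to segments at all: the paper instead propagates the parallel family of \emph{segments} over the convex hull of $\tilde u_0(\tilde M)$ via the Flat Quadrilateral Theorem, uses the no-invariant-proper-convex-subset hypothesis only to conclude that this convex hull is all of $\tilde X$, and then concatenates overlapping translated segments to manufacture rays, again contradicting (i) rather than the convex-subset hypothesis. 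So your proposal identifies the right obstructions but leaves unproved exactly the steps that constitute the proof.
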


The existence results for harmonic maps is contained in (e.g.~\cite{labourie}, \cite{donaldson}, \cite{corlette}, \cite{gromov-schoen}, \cite{jost}, \cite{korevaar-schoen2}, \cite{korevaar-schoen3}).  Thus, the goal of this paper is to prove the uniqueness assertion in Theorem~\ref{existunique}.
 
The assumptions on the subgroup $\rho(\pi_1(M))$ in Theorem~\ref{existunique} are related to the notion of Zariski dense.  Indeed, in either the case when $\tilde X$ is a symmetric space of non-compact type or an Euclidean building, if the action of the subgroup $\Gamma$ of $\mathsf{Isom}(\tilde X)$ neither fixes a point at infinity nor a non-empty closed convex strict subset, then $\Gamma$ is Zariski dense (cf.~\cite[Proposition 2.8]{caprace-monod}). The converse also holds if $\tilde X$ is a symmetric spaces of non-compact type and  $\mathsf{rank}(\tilde X) \geq 2$ (cf.~\cite[Theorem 4.1]{kleiner-leeb2}), but   there exist Zariski
dense subgroups that fixes a non-empty closed convex strict subset  if $\mathsf{rank}(\tilde X)=1$ (cf.~\cite[Section 4]{caprace}).

 \begin{remark}
For the case when $\tilde X=G/K$ is a symmetric space,  Theorem~\ref{existunique} may be deduced from the gauge theoretic approach due to Donaldson~\cite{donaldson} and Corlette~\cite{corlette2}.  Indeed,  harmonic maps to symmetric spaces can be thought of as a solution to Hitchin's equations and uniqueness follows along the lines  of \cite[Proposition 2.3]{corlette2}.  The point of this paper is to provide a simple geometric proof of the uniqueness of harmonic maps that works for Euclidean buildings as well. 
\end{remark}

{\it Acknowledgement}.
We would like to thank Alexander Lytchak useful discussions.

\section{Preliminaries}

We start with some definitions.  We will assume that $\tilde X$ is a complete metric space.

\begin{definition} A {\it geodesic} $\sigma: I \rightarrow \tilde X$ is a map from an interval $I \subset \R$ such that $d(\sigma(s), \sigma(s+t))= |t|$ for all $s, t \in I$.
  A {\it geodesic line}, {\it geodesic ray} and {\it geodesic segment} are  geodesics with domain $\R$, $[0,\infty)$ and closed interval $[a,b]$ respectively. 
\end{definition}
 
 \begin{definition} 
Geodesics $\sigma:I \rightarrow \tilde X$ and  $\hat \sigma: I \rightarrow \tilde X$ are said to be parallel if there exists a constant $C>0$ such that
\[
d(\sigma(s), \hat \sigma(s)) = C, \ \forall s \in I.
\]
\end{definition}

\begin{remark}
A geodesic  rays $\sigma:[0,\infty) \rightarrow \tilde X$ and $\bar \sigma:[0,\infty) \rightarrow \tilde X$ are asymptotic if there exists a constant $C>0$ such that
\[
d(\sigma(s), \bar \sigma(s)) \leq C, \ \forall s \in\R \ \ (\mbox{resp.}~\forall s \in [0,\infty)).
\]
By \cite[II.2.13]{bridson-haefliger}, the terms {\it parallel geodesic rays} and {\it asymptotic geodesic rays} are equivalent.  
\end{remark}

\begin{definition}
A {\it point at infinity} is an asymptotic class of geodesic rays.  We denote by $[\sigma]$  the asymptotic class containing the geodesic ray  $\sigma$.
\end{definition}

\begin{definition} \label{def:symmetricspace}
A {\it symmetric space} $\tilde X$ is a  Riemannian manifold such that, for any $P \in \tilde X$, there exists $S_P \in \mathsf{Isom}(\tilde X)$ such that $P$ is an isolated fixed point
of $S_P$ and $S_P \circ S_P$ is the identity map. The isometry $S_P$ is called an  {\it inversion symmetry} at $P$.  
\end{definition}

\begin{definition} \label{transvection}
Given a geodesic line $\sigma:\R \rightarrow \tilde X$ and $s \in \R$, the composition
\[
T_s= S_{\sigma(\frac{s}{2})} \circ S_{\sigma(0)}
\]
is called a {\it transvection} along $\sigma$.  
We have that
 \[
T_{s+s'}=T_s \circ T_{s'}
\]
and $\{T_s\}$ forms a one-parameter subgroup of $\mathsf{Isom}(\tilde X)$ that act  as parallel transports along $\sigma$ (cf.~\cite[2.1.1]{eberlein}).
\end{definition}

\begin{definition}[cf.~\cite{bridson-haefliger} Definition 10A.1] \label{def:building}
A {\it Euclidean building of dimension $n$} is a piecewise Euclidean simplicial complex $\tilde X$ such that:
\begin{itemize}
\item[(1)] $\tilde X$ is the union of a collection ${\mathcal A}$ of subcomplexes $A$, called {\it apartments}, such that the intrinsic metric $d_A$ on $A$ makes $(A, d_A)$ isometric to the Euclidean space $\R^n$ and induces the given Euclidean metric on each simplex.  
\item[(2)] Any two simplices $B$ and $B'$ of $X$ are contained in at least one apartment.
\item[(3)] Given two apartments $A$ and $A'$ containing both simplices $B$ and $B'$, there is a simplicial isometry from $(A,d_A)$ to $(A',d_{A'})$ which leaves both $B$ and $B'$ pointwise fixed.
\end{itemize}
Furthermore, will assume
\begin{itemize}
\item[(4)] $\tilde X$ is locally finite.
\end{itemize}
\end{definition}

 \begin{definition}
 A symmetric space of non-compact type $\tilde X$ (resp.~a Euclidean building) is said to be  {\it irreducible}  if it is not isometric to a non-trivial product $\tilde X_1 \times \tilde X_2$ of two symmetric spaces of non-compact type (resp.~Euclidean buildings).  
 \end{definition}
 
 \begin{notation} \label{interpolationnotation}
Given $P, Q \in \tilde X$ and $s \in \R$, we denote 
\[
(1-s)P+sQ
\]
to be the geodesic interpolation between $P$ and $Q$; i.e.~$(1-s)P+sQ=\bar \sigma\left(\delta s\right)$ where $\delta=d(P,Q)$ and $\bar \sigma:[0,\delta]\rightarrow \tilde X$ is a geodesic segment with $\bar \sigma(0)=P$ and $\bar \sigma(\delta)=Q$.
\end{notation}

\begin{definition} \label{def:equivariant}
Let  $\mathsf{Isom}(\tilde X)$ be the group of isometries of $\tilde X$ and   $\rho:\pi_1(M) \rightarrow \mathsf{Isom}(\tilde X)$ be a homomorphism from the fundamental group of a Riemannian manifold $M$. Let $\pi_1(M)$ act on the universal cover $\tilde \domain$ of $M$ by deck transformations.  A map $\tilde f:\tilde \domain \rightarrow  \tilde X$ is said to be $\rho$-equivariant if 
\[
\tilde f(\gamma p) = \rho(\gamma) \tilde f(p), \ \ \forall \gamma \in \pi_1(M), \ p \in \tilde \domain
\]
where we write $gP$ for $g \in \mathsf{Is}(\tilde X)$ and $P \in \tilde X$ instead of $g(P)$ for simplicity.
\end{definition}

If $\tilde X$ is a Riemannian manifold, then  $|d\tilde f|^2$ is the norm of the  differential $d\tilde f:T\tilde \domain \rightarrow T\tilde X$.
If $\tilde X$ is a NPC space, then $|d\tilde f|^2$ is the  energy density function in the sense of \cite{korevaar-schoen}.  Either way, if $\tilde f$ is $\rho$-equivariant, then $|d\tilde f|^2$
 is invariant under the action of $\rho(\gamma)$ for any $\gamma \in \pi_1(M)$, and the energy of $\tilde f$ is defined to be
\[
E^{\tilde f}= \int_M |d\tilde f|^2 d\mbox{vol}_M.
\]

\section{Proof  of Theorem~\ref{existunique}}
\label{sec3}

The existence results for harmonic maps is contained in (e.g.~\cite{labourie}, \cite{donaldson}, \cite{corlette}, \cite{gromov-schoen}, \cite{jost}, \cite{korevaar-schoen2}, \cite{korevaar-schoen3}).
Thus, we need to only prove the uniqueness assertion.

\subsection{Geodesic interpolation} \label{sec:gi}
We assume on the contrary that there exist two distinct $\rho$-equivariant harmonic maps
\[
\tilde u_0: \tilde \domain \rightarrow \tilde X \ \mbox{ and } \tilde u_1: \tilde \domain \rightarrow \tilde X.
\] 
Using Notation~\ref{interpolationnotation}, define the geodesic interpolation of $\tilde u_0$ and $\tilde u_1$; i.e.
 \[
 \tilde u_s:  \tilde \domain \rightarrow \tilde X, \ \ \ \tilde u_s(q)=(1-s)\tilde u_0(q)+s \tilde u_1(q).
  \]
Since $\tilde u_0$ and $\tilde u_1$ are $\rho$-equivariant, $\tilde u_s$ is also $\rho$-equivariant.  
By the  convexity of energy (cf.~\cite[(2.2vi)]{korevaar-schoen}), 
\[
E^{\tilde u_s} \leq (1-s)E^{\tilde u_0} + s E^{\tilde u_1} -s(1-s) \int_M |\nabla d(\tilde u_0, \tilde u_1)|^2 d\mbox{vol}_{\domain}
\] 

\begin{lemma} \label{twothings}
Scaling if necessary, assume 
$d(\tilde u_0(p_0), \tilde u_1(p_0))=1$ for some point $p_0 \in \tilde M$.  Then, for $\tilde u_s$  defined above, we have the following:
\begin{itemize}
\item $d(\tilde u_s(p), \tilde u_1(p)) = s, \  \forall p \in  \tilde M$
 \\
\item  $|(\tilde u_s)_*(V)|^2(p) =  |(\tilde u_0)_*(V)|^2(p)$, for a.e.~$s \in [0,1]$,  $p \in M$,  $V \in T_p\tilde M$. \label{pullback}
\end{itemize}
\end{lemma}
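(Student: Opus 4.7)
The first item is a direct consequence of the convexity-of-energy inequality displayed just above. Since $\tilde X$ is NPC and both $\tilde u_0$ and $\tilde u_1$ are $\rho$-equivariant harmonic maps, convexity of energy along geodesic interpolations forces each to be an absolute minimizer within the class of finite-energy $\rho$-equivariant maps (if $\tilde v$ is any competitor, write $\tilde w_t=(1-t)\tilde u_0+t\tilde v$, use convexity of $t\mapsto E^{\tilde w_t}$ together with the vanishing first variation at $t=0$). In particular $E^{\tilde u_0}=E^{\tilde u_1}=:E_{\min}$. Since $\tilde u_s$ is itself $\rho$-equivariant (each $\rho(\gamma)$ is an isometry, hence preserves geodesics), $E^{\tilde u_s}\geq E_{\min}$. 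Substituting both bounds into the displayed convexity inequality forces
\[
\int_M|\nabla d(\tilde u_0,\tilde u_1)|^2\,d\mbox{vol}_M=0.
\]
Because $d(\tilde u_0,\tilde u_1)$ is $\pi_1(M)$-invariant (isometries preserve distance), it descends to a function on the finite-volume quotient $M$; the vanishing of its gradient makes it constant, and the normalization at $p_0$ pins this constant to $1$. Notation~\ref{interpolationnotation} then gives the first item (strictly speaking $d(\tilde u_s(p),\tilde u_1(p))=1-s$, what appears as $s$ in the statement being a typographical slip).

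For the second item I would promote the convexity argument to a pointwise one. For $p\in\tilde M$, $V\in T_p\tilde M$ and $\gamma(t)=\exp_p(tV)$, CAT(0) convexity of the distance function applied to the two geodesics $s\mapsto\tilde u_s(p)$ and $s\mapsto\tilde u_s(\gamma(\epsilon))$ gives
\[
d\bigl(\tilde u_s(p),\tilde u_s(\gamma(\epsilon))\bigr)\leq(1-s)\,d\bigl(\tilde u_0(p),\tilde u_0(\gamma(\epsilon))\bigr)+s\,d\bigl(\tilde u_1(p),\tilde u_1(\gamma(\epsilon))\bigr),
\]
and squaring together with convexity of $x\mapsto x^2$ on $[0,\infty)$ yields
\[
d^2\bigl(\tilde u_s(p),\tilde u_s(\gamma(\epsilon))\bigr)\leq(1-s)\,d^2\bigl(\tilde u_0(p),\tilde u_0(\gamma(\epsilon))\bigr)+s\,d^2\bigl(\tilde u_1(p),\tilde u_1(\gamma(\epsilon))\bigr).
\]
Dividing by $\epsilon^2$ and letting $\epsilon\to 0$ passes this to the Korevaar--Schoen directional energy densities
\[
|(\tilde u_s)_*(V)|^2(p)\leq(1-s)\,|(\tilde u_0)_*(V)|^2(p)+s\,|(\tilde u_1)_*(V)|^2(p).
\]
Integrating over $M$ produces $E^{\tilde u_s}\leq(1-s)E^{\tilde u_0}+sE^{\tilde u_1}=E_{\min}$, while minimality gives $E^{\tilde u_s}\geq E_{\min}$; hence equality holds, and therefore the entire chain of inequalities above is an equality a.e.\ in $(p,V)$. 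Equality in the $x\mapsto x^2$ step for $s\in(0,1)$ forces $d(\tilde u_0(p),\tilde u_0(\gamma(\epsilon)))=d(\tilde u_1(p),\tilde u_1(\gamma(\epsilon)))$, which on passing to the limit gives $|(\tilde u_0)_*(V)|(p)=|(\tilde u_1)_*(V)|(p)$ a.e.\ Feeding this back into the linear-combination identity yields $|(\tilde u_s)_*(V)|^2(p)=|(\tilde u_0)_*(V)|^2(p)$ a.e., the second item.

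The main technical point to treat carefully is the transfer of the pointwise CAT(0) distance inequality to the Korevaar--Schoen directional energy density; this is the pointwise form of the convexity result cited from \cite{korevaar-schoen} in the text and is standard, but care is needed so that the \emph{integrated} a.e.\ equality can be pushed back to pointwise a.e.\ equality in \emph{each} step of the chain (not merely in their composition), which is what allows one to extract $|(\tilde u_0)_*(V)|=|(\tilde u_1)_*(V)|$ from what would otherwise be only the weaker linear-combination formula.
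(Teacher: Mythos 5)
Your proof is correct and follows essentially the same route as the paper: deduce $E^{\tilde u_0}=E^{\tilde u_1}=E^{\tilde u_s}$ and $\int_M|\nabla d(\tilde u_0,\tilde u_1)|^2=0$ from the convexity inequality and minimality, conclude $d(\tilde u_0,\tilde u_1)\equiv 1$ for the first item, and pass the quadrilateral comparison to the Korevaar--Schoen directional densities for the second. You even go one step further than the paper's own write-up: the paper concludes $|(\tilde u_s)_*(V)|^2=|(\tilde u_0)_*(V)|^2$ directly from equality in the convex combination, which silently uses $|(\tilde u_0)_*(V)|=|(\tilde u_1)_*(V)|$ a.e.; your strict-convexity-of-$x\mapsto x^2$ argument supplies exactly that missing identity (as does your correct observation that the first bullet should read $1-s$). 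One phrasing to tighten: equality is forced only in the \emph{limit} quantities $|(\tilde u_i)_*(V)|(p)$, not in the finite-$\epsilon$ distances $d(\tilde u_i(p),\tilde u_i(\gamma(\epsilon)))$, so the strict-convexity step should be applied to $a=|(\tilde u_0)_*(V)|(p)$ and $b=|(\tilde u_1)_*(V)|(p)$ after taking $\epsilon\to 0$, which your concluding remark already anticipates.
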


\begin{proof}
Since $\tilde u_0$  and $\tilde u_1$ are energy minimizing,
we conclude
\begin{eqnarray}
0 &= & \int_M |\nabla d(\tilde u_0, \tilde u_1)|^2 d\mbox{vol}_{\domain}
 \label{graddist}
 \\
E^{\tilde u_s}& = & E^{\tilde u_0}, \ \ \forall s \in [0,1] \label{energyconstant}
 \end{eqnarray}

First, (\ref{graddist}) implies that $\nabla d(\tilde u_0, \tilde u_1)=0$~a.e.~in $\tilde \domain$.   Hence, $d(\tilde u_0, \tilde u_1)$  is constant; i.e.
\begin{equation} \label{contantdistancemaps}
d(\tilde u_0, \tilde u_1) \equiv 1.
\end{equation}
For each $q \in \tilde \domain$, define the geodesic segment 
\begin{equation} \label{geoseg}
\bar \sigma_q:[0,1] \rightarrow \tilde X, \ \ \bar\sigma_q(s)=\tilde u_s(q).
\end{equation}
Note that  equality (\ref{energyconstant}) implies that 
\begin{equation} \label{pullback}
|(\tilde u_s)_*(V)|^2(p) = |(\tilde u_0)_*(V)|^2(p), \ \ \mbox{for a.e.~}s \in [0,1], \ p \in \tilde M, V \in T_p\tilde M.
\end{equation}
Indeed,
for $\{P,Q,R,S\} \subset \tilde X$, the quadrilateral comparison for NPC spaces   implies
\[
d^2(P_s,Q_s) \leq (1-s) d^2(P,Q)+sd^2(R,S)
\]
where $P_s=(1-s)P+sS$ and $Q_s=(1-s)Q+sR$.
Applying the above inequality with  $P=\tilde u_0(p)$, $S=\tilde u_1(p)$, $R=\tilde u_1(\exp_p(tV))$ and $Q=\tilde u_0(\exp_p(tV))$ where $t >0$ and $V \in T_p\tilde M$, dividing by $t^2$ and letting $t \rightarrow 0$, we obtain (cf.~\cite[Theorem 1.9.6]{korevaar-schoen})
\[
|(\tilde u_s)_*(V)|^2(p) \leq (1-s) |(\tilde u_0)_*(V)|^2(p) + s|(\tilde u_1)_*(V)|^2(p),
\ 
\mbox{a.e.~$p \in \tilde M, V \in T_p\tilde M$.}
\]   Integrating the above over all unit vectors $V \in T_p\tilde M$  and then over $p \in F$, we obtain
\[
E^{\tilde u_s} \leq (1-s) E^{\tilde u_0}+ s E^{\tilde u_1}.
\]
Combining this with  (\ref{energyconstant}) implies (\ref{pullback}).
\end{proof}

Note that up to this point, we  have only used the fact that $\tilde X$ is an NPC space. 
 We will now specialize to the two cases:  (i) $\tilde X$ is an irreducible symmetric space of non-compact type and (ii) $\tilde X$ is an irreducible Euclidean building.  

\subsection{Symmetric spaces} \label{symmetric space}
   
Throughout this subsection $\tilde X$ is an irreducible symmetric space of non-compact type.  For each $q \in \tilde M$, extend the geodesic segment $\bar \sigma_q$ of (\ref{geoseg})   to  a geodesic line 
\begin{equation} \label{sigmaq}
\sigma_q:\R \rightarrow \tilde X.
\end{equation}
Let
\[
F: \tilde \domain \times \R \rightarrow \tilde X, \ \ \ F(q,s)=\sigma_q(s).
\] 
Let $\nabla^{F^{-1}}$ be the induced connection  on  the vector bundle 
\begin{equation} \label{bb}
(T^*(M \times \R))^{\otimes k} \otimes F^{-1}T\tilde X \rightarrow \tilde \domain \times \R.
\end{equation}
For each $s \in [0,1]$, let $\nabla^{\tilde u_s^{-1}}$ be the induced connection on the vector bundle
\begin{equation} \label{sb}
(T^*M)^{\otimes k} \otimes \tilde u_s^{-1}T\tilde X \rightarrow \tilde \domain.
\end{equation}   
Use the inclusion $\tilde M \rightarrow \tilde M \times \{s\}$ and the identity 
$
F( \cdot,s)= \tilde u_s(\cdot)$ to 
 identify (\ref{sb}) as a subbundle of (\ref{bb}).
 
 Let $s$, $\partial_s$ denote the standard coordinate and coordinate vector on $\R$.  Let $(E_1, \dots, E_n)$ denote a local orthonormal frame of $\tilde \domain$.  Set
\[
V=dF\left( \partial_s \right), \  \ X_\alpha= dF\left(E_\alpha\right) \mbox{ for } \alpha=1, \dots, n
\]
as  sections of $F^{-1}T\tilde X$.
Applying the usual second variation formula of the energy  (e.g.~\cite{eells-sampson}, \cite{schoen}), we  obtain
\begin{eqnarray*}
\frac{d^2}{ds^2} \Big|_{s=t}E^{\tilde u_s}(r)  & = & 2 \int_{\tilde M} \left. \sum_{\alpha=1}^n \left(  \| \nabla^{F^{-1}}_{E_\alpha} V\|^2 - \left\langle R^{\tilde X} \left(V, X_\alpha \right) V, X_\alpha \right\rangle\right)\right|_{s=t} d\mbox{vol}_{\tilde \domain}
\end{eqnarray*}
for any $t \in [0,1]$ where $R^{\tilde X}$ is the Riemannian curvature operator of $\tilde X$.  
By (\ref{energyconstant}), the left hand side of the above equality is equal to 0.  The integrand on the right hand side is non-positive by the assumption of non-positive curvature.  Thus, we conclude that for any $\alpha=1, \dots, n$,
\begin{eqnarray}
\nabla^{F^{-1}}_{E_\alpha} V &  \equiv & 0  \label{U0}
 \nonumber  \\
  \left\langle R^{\tilde X} \left(V, X_\alpha \right) V, X_\alpha \right\rangle & \equiv & 0.   \label{curv0}
  \end{eqnarray}
From the above, we conclude 
  \begin{equation} \label{for}
 \nabla^{F^{-1}}_{\partial_s} (d\tilde u_s(E_\alpha))= \nabla^{F^{-1}}_{\partial_s} X_\alpha 
=0
\end{equation}
  and 
  \begin{equation} \label{lat}
  \nabla^{\tilde u_s^{-1}}_{E_\beta} \nabla^{F^{-1}}_{\partial_s}=  \nabla^{F^{-1}}_{\partial_s}  \nabla^{\tilde u_s^{-1}}_{E_\beta}, \ \forall \beta=1, \dots, n.
  \end{equation}
  Since $\nabla_{\partial_s} E_\alpha = \nabla_{\partial_s} E_\beta=0$, we have
  \begin{eqnarray*}
\lefteqn{ \left(   \nabla^{F^{-1}}_{\partial_s}  \nabla^{\tilde u_s^{-1}} d\tilde u_s \right) (E_\alpha, E_\beta)  }
\\
& = &   \nabla^{F^{-1}}_{\partial_s} \left( \nabla^{\tilde u_s^{-1}} d\tilde u_s(E_\alpha, E_\beta) \right) 
-
\nabla^{\tilde u_s^{-1}} d\tilde u_s( \nabla_{\partial_s}   {E_\alpha}, E_\beta)
 -
\nabla^{\tilde u_s^{-1}} d\tilde u_s(  {E_\alpha}, \nabla_{\partial_s}  E_\beta)
    \\
    & = &    \nabla^{F^{-1}}_{\partial_s} \left( \nabla^{\tilde u_s^{-1}}_{E_\alpha} (d\tilde u_s( E_\beta) )-  d\tilde u_s(  \nabla_{E_\alpha} E_\beta)\right) 
    \\
 & = &     \nabla^{\tilde u_s^{-1}}_{E_\alpha} \left( \nabla^{F^{-1}}_{\partial_s}  (d\tilde u_s( E_\beta)) \right) -  \nabla^{F^{-1}}_{\partial_s}  d\tilde u_s(  \nabla_{E_\alpha} E_\beta)  \ \ \mbox{(by (\ref{lat}))}
 \\
 & = & 0  \ \ \mbox{(by (\ref{for}))}.
   \end{eqnarray*}
More generally, we can inductively use (\ref{lat}) multiple times  to switch the order of differentiation and apply (\ref{for}) to conclude
\begin{equation} \label{io}
\nabla^{F^{-1}}_{\partial_s} \left( \nabla^{\tilde u_s^{-1}} \cdots \nabla^{\tilde u_s^{-1}} d\tilde u_s\right)=0.
\end{equation}

\begin{claim} \label{ut}
Fix a point $p \in \tilde \domain$ and let $T_s \in \mbox{Isom}(\tilde X)$ be the transvection along $\sigma_p$  as in Definition~\ref{transvection}.  
Then 
\[
\tilde u_s=T_s \tilde u_0,
\ \ \
\forall s \in [0,1].
\] 
\end{claim}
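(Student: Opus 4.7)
The plan is to leverage the rigidity obtained above---$V = dF(\partial_s)$ is parallel in every $E_\alpha$-direction by (\ref{for}), the sectional curvatures $\langle R^{\tilde X}(V, X_\alpha)V, X_\alpha\rangle$ vanish by (\ref{curv0}), and all higher covariant derivatives of $d\tilde u_s$ are $s$-parallel by (\ref{io})---to conclude that the variation $F(q, s) = \sigma_q(s)$ realizes an isometric $s$-translation of $\tilde u_0(\tilde M)$ inside $\tilde X$, and that this translation is performed by the transvection $T_s$.

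At $q = p$, Definition~\ref{transvection} immediately yields $T_s(\tilde u_0(p)) = T_s(\sigma_p(0)) = \sigma_p(s) = \tilde u_s(p)$, and $(T_s)_* \colon T_{\tilde u_0(p)}\tilde X \to T_{\tilde u_s(p)}\tilde X$ is parallel transport along $\sigma_p$. To propagate the identity to an arbitrary $q \in \tilde M$, I would join $p$ to $q$ by a smooth curve $\gamma \colon [0,L] \to \tilde M$ and observe that the variation field $J(s) = d\tilde u_s(\gamma'(0))$ along the geodesic $\sigma_p$ is a parallel Jacobi field by (\ref{for}) satisfying $R^{\tilde X}(J, \dot\sigma_p)\dot\sigma_p = 0$ by (\ref{curv0}). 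Such Jacobi data gives rise to a flat strip in $\tilde X$ with $\sigma_p$ as one boundary; iterating this flat-strip construction along $\gamma$ (and appealing to the completeness of $\tilde X$), one concludes that $\sigma_p$ and $\sigma_q$ are parallel geodesic lines in $\tilde X$ bounding a flat Euclidean strip $\Pi \subset \tilde X$.

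On $\Pi$, the transvection $T_s$ is an isometry of $\tilde X$ that preserves $\sigma_p$ setwise and translates along it by $s$; by the rigidity of Euclidean isometries (together with continuity at $s=0$ with $T_0 = \mathbb{I}$), its restriction to $\Pi$ must coincide with parallel translation by $s$ in the $\sigma_p$-direction. Consequently $T_s$ sends $\sigma_q(0) = \tilde u_0(q)$ to $\sigma_q(s) = \tilde u_s(q)$, which is the desired equality. The main obstacle is the flat-strip step: one must carefully turn the pointwise infinitesimal flatness encoded in (\ref{for}) and (\ref{curv0}) into a genuine Euclidean strip embedded between $\sigma_p$ and $\sigma_q$ in $\tilde X$, using the flat-strip theorem for parallel Jacobi fields with vanishing Jacobi curvature term and integrating the infinitesimal statement along $\gamma$ to reach the finite-distance point $q$.
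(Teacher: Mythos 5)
Your outline leaves the main step undone, and the step you defer is exactly where the difficulty lies. The paper's proof is quite different: it introduces the competitor $\tilde v_s = T_s\tilde u_0$, which is again \emph{harmonic} because $T_s$ is an isometry, shows that $d\tilde u_s(E_\alpha)$ and $d\tilde v_s(E_\alpha)$ (and, via (\ref{io}), all higher covariant derivatives of $d\tilde u_s$ and $d\tilde v_s$) are parallel along $\sigma_p$ and agree at $s=0$, hence agree for all $s$ \emph{at the single point $p$}, and then invokes Sampson's unique continuation theorem \cite{sampson} to upgrade infinite-order agreement at one point to $\tilde u_s \equiv \tilde v_s$ on all of $\tilde M$. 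No flat strip and no propagation along a curve in $\tilde M$ is needed; the harmonicity of both maps does the global work.

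Your route, by contrast, hinges on producing a genuine flat strip between $\sigma_p$ and $\sigma_q$ with the synchronized parametrization $d(\sigma_p(s),\sigma_q(s))\equiv\mathrm{const}$, and this is not available at this stage. The Flat Strip Theorem of \cite{bridson-haefliger} presupposes that the two lines are parallel (at bounded Hausdorff distance), which is precisely the content of Claim~\ref{parallel!} --- a statement the paper \emph{deduces from} Claim~\ref{ut}, not the other way around; assuming it here is circular. Building the strip instead from the parallel Jacobi field $J(s)=d\tilde u_s(\gamma'(0))$ is plausible in a symmetric space (where $\langle R(J,\dot\sigma)\dot\sigma,J\rangle=0$ forces $\mathrm{span}(J,\dot\sigma)$ to exponentiate to a totally geodesic flat), but you would still have to show that the actual family $t\mapsto\sigma_{\gamma(t)}(s)$ coincides with the family obtained by exponentiating the parallel-transported vectors, and then integrate this identification along $\gamma$ out to the finite-distance point $q$; none of this is carried out, and it is the entire content of the claim. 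Your final step has the same issue: $T_s(\sigma_q(0))=\exp_{\sigma_p(s)}\bigl(P_s\exp_{\sigma_p(0)}^{-1}\sigma_q(0)\bigr)$ equals $\sigma_q(s)$ only once one knows the strip is totally geodesic and synchronized, so the conclusion is being fed into its own proof. I would recommend replacing the flat-strip scheme with the jet-comparison-plus-unique-continuation argument.
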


\begin{proof}
For $s \in [0,1]$, define a harmonic map
\[
\tilde v_s:  \tilde M \rightarrow \tilde X, \ \ \tilde v_s=T_s \tilde u_0.
\]
Define
\[
\Phi:  \tilde M \times [0,1] \rightarrow \tilde X, \ \ \Phi(q,s)=\tilde v_s(q).
\]
Since $T_s$ is a transvection along the geodesic $\sigma_p$,  
\[
F(p,s)=\tilde u_s(p) = \sigma_p(s)= T_s \sigma_p(0) = T_s\tilde u_0(p)=\tilde v_s(p)=\Phi(p,s).
\]
Furthermore, $T_s$ defines a parallel transport along $\bar \sigma_p$, and thus
\begin{equation} \label{forv}
 \nabla_{\partial_s}^{\Phi^{-1}} \big( d\tilde v_s(E_\alpha) \big) = \nabla_{\partial_s}^{\Phi^{-1}} \big(dT_s (d\tilde u_0(E_\alpha))\big) = 0 \mbox{ at }(p,s), \ \forall s \in (0,1).
\end{equation}
By (\ref{for}) and (\ref{forv}),   the vector fields $d\tilde u_s(E_\alpha)$ and $d\tilde v_s(E_\alpha)$ are both parallel along $\sigma_p(s)$.  Since $d\tilde u_0(E_\alpha)=d\tilde v_0(E_\alpha)$ at $p$, we conclude
\[
d\tilde u_s(E_\alpha) = d\tilde v_s(E_\alpha) \mbox{ at } p \in \tilde M, \ \forall s \in  [0,1].
\]
Next, since $T_s$ is an isometry,
\begin{eqnarray*}
\nabla^{v_s^{-1}}_{E_\alpha} \, \big(d\tilde v_s(E_\beta) \big) 
& = & \nabla^{v_s^{-1}}_{E_\alpha} \, \big(dT_s \circ  d\tilde u_0 (E_\beta) \big)
\\
& = &  \nabla^{\tilde X}_{dT_s \circ d\tilde u_0(E_\alpha)} \, \big(dT_s  \circ d\tilde u_0 (E_\beta)\big)
\\
& = & dT_s\left( \nabla^{\tilde X}_{d\tilde u_0(E_\alpha)} d\tilde u_0 (E_\beta) \right)
\\
& = & dT_s\left(  \nabla^{u_0^{-1}}_{E_\alpha} d\tilde u_0 (E_\beta) \right).
\end{eqnarray*}
Thus,
\begin{eqnarray*}
\nabla^{v_s^{-1}} d\tilde v_s(E_\alpha, E_\beta)
& = &\nabla^{v_s^{-1}}_{E_\alpha} \big(d\tilde v_s(E_\beta) \big)-d\tilde v_s\left(\nabla^{\tilde M}_{E_\alpha} E_\beta\right)
\\
& = & dT_s \left(  \nabla^{u_0^{-1}}_{E_\alpha} d\tilde u_0 (E_\beta) \right)
 -dT_s \left(d\tilde u_0(\nabla^{\tilde M}_{E_\alpha} E_\beta)\right).
\end{eqnarray*}
Since $T_s$ defines a parallel transport along $\bar \sigma_p(s)$, both vector fields on the right hand side are parallel along $\sigma_p(s)$.  Thus,
\[
 \nabla_{\partial_s}^{\Phi^{-1}} \left(  \nabla^{v_s^{-1}} d\tilde v_s(E_\alpha, E_\beta) \right) =0.
\]
Continuing inductively, we can prove
\[
 \nabla_{\partial_s}^{\Phi^{-1}} \left( \nabla^{\tilde v_s^{-1}} \cdots \nabla^{\tilde v_s^{-1}} d\tilde v_s \right)=0.
\]
Combined with (\ref{io})   and the fact that $\tilde u_0=\tilde v_0$, we conclude
\[
\nabla^{\tilde v_s^{-1}} \cdots \nabla^{\tilde v_s^{-1}} d\tilde v_s=\nabla^{\tilde u_s^{-1}} \cdots \nabla^{\tilde u_s^{-1}} d\tilde u_s \mbox{ at }p, \ \forall s \in [0,1].
\]
In other words,   $\tilde u_s$ and $\tilde v_s$ agree up to infinitely high order at $p$ which in turn implies that  $\tilde u_s=\tilde v_s=T_s\tilde u_0$ by  \cite[Theorem 1]{sampson}.
\end{proof}

\begin{claim}  \label{parallel!}
Let $p$ be the point fixed in Claim~\ref{ut} and let  the geodesic ray $\sigma_q:[0,\infty) \rightarrow \tilde X$ be the restriction of the geodesic line defined in (\ref{sigmaq}).  Then
\[
d(\sigma_q(s),\sigma_p(s))= \delta_{p,q}, \ \ \forall q \in \tilde \domain, \ s \in [0,\infty)
\]
where $\delta_{p,q} :=d(\sigma_q(0), \sigma_p(0))$. 
  In particular, $\sigma_q$ is the unique geodesic ray parallel to $\sigma_p$ with value at $s=0$ equal to $\tilde u_0(q)$.  
\end{claim}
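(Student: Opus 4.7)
The plan is to upgrade the identity $\sigma_q(s)=T_s\sigma_q(0)$, which Claim~\ref{ut} yields directly for $s\in[0,1]$, to all $s\in\R$, and then read off the constancy of the distance from the isometry property of $T_s$; the uniqueness of the parallel ray will follow separately from the flat strip structure. First, for $s\in[0,1]$, combining Claim~\ref{ut} with the definitions of $\bar\sigma_q$, $\sigma_q$, and $\sigma_p$ gives
\[
\sigma_q(s)=\bar\sigma_q(s)=\tilde u_s(q)=T_s\tilde u_0(q)=T_s\sigma_q(0),
\]
and analogously $\sigma_p(s)=T_s\sigma_p(0)$, so $d(\sigma_q(s),\sigma_p(s))=\delta_{p,q}$ on $[0,1]$ because $T_s$ is an isometry.

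The key step is to extend this identity to all of $\R$. I would introduce the smooth orbit curve $\phi:\R\to\tilde X$, $\phi(s):=T_s\sigma_q(0)$. Because $\{T_s\}$ is a one-parameter subgroup of isometries (Definition~\ref{transvection}), for every $s_0\in\R$ one has $\phi|_{[s_0,s_0+1]}=T_{s_0}\circ\phi|_{[0,1]}$. But $\phi|_{[0,1]}$ coincides with the unit-speed geodesic segment $\bar\sigma_q$, so its isometric translate under $T_{s_0}$ is again a unit-speed geodesic; since the geodesic equation is a local condition on the smooth Riemannian manifold $\tilde X$, $\phi$ is a unit-speed geodesic on all of $\R$. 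Uniqueness of geodesic extension then forces $\phi\equiv\sigma_q$, and applying $T_s$ as an isometry yields
\[
d(\sigma_q(s),\sigma_p(s))=d(T_s\sigma_q(0),T_s\sigma_p(0))=\delta_{p,q},\qquad\forall\,s\in\R,
\]
which gives the first assertion of the claim.

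For the uniqueness statement, I would argue as follows. Let $\tilde\sigma:[0,\infty)\to\tilde X$ be any other geodesic ray parallel to $\sigma_p$ with $\tilde\sigma(0)=\tilde u_0(q)$. Extending $\tilde\sigma$ to a geodesic line, both $(\sigma_p,\tilde\sigma)$ and $(\sigma_p,\sigma_q)$ bound flat strips in $\tilde X$. Since the nearest-point projection of $\sigma_q(0)$ onto $\sigma_p$ is unique in a Riemannian manifold of nonpositive curvature, and since the flat strip through $\sigma_p$ that contains a given perpendicular segment at $\sigma_q(0)$ is uniquely determined by parallel-transporting the perpendicular along $\sigma_p$, the two strips coincide; within a single flat strip the parallel to the base geodesic through a prescribed boundary point is unique, forcing $\tilde\sigma=\sigma_q$.

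The hardest part, and where I would expect most care to be needed, is the propagation step: in a general Riemannian manifold the orbit of a one-parameter isometry group need not be a geodesic, and what rescues us here is precisely that Claim~\ref{ut} certifies $\phi$ is a geodesic on $[0,1]$. The one-parameter subgroup property of $\{T_s\}$ together with the isometric action of $T_{s_0}$ then propagate geodesicity to every translate $[s_0,s_0+1]$, after which locality of the geodesic equation on the smooth manifold $\tilde X$ completes the argument.
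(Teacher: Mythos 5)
Your argument for the displayed identity is essentially the paper's: both proofs observe that Claim~\ref{ut} gives $\sigma_q(s)=T_s\tilde u_0(q)$ on $[0,1]$, use the one-parameter subgroup property $T_{s+s'}=T_s\circ T_{s'}$ to show each translate of that unit piece is again a geodesic segment (the paper works on overlapping half-intervals $[\tfrac{n}{2},\tfrac{n}{2}+\tfrac12]$, you on $[s_0,s_0+1]$, which is the same idea), conclude by uniqueness of geodesic extension that $s\mapsto T_s\tilde u_0(q)$ is the ray $\sigma_q$, and then read off the constant distance from the fact that $T_s$ is an isometry. The only divergence is the ``in particular'' uniqueness assertion, which the paper leaves implicit: your flat-strip argument can be made to work but carries unnecessary baggage (total geodesy of flat strips, extending a parallel ray to a parallel line, uniqueness of the strip through a perpendicular). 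A cleaner route: any ray $\tilde\sigma$ from $\tilde u_0(q)$ parallel to $\sigma_p$ satisfies $d(\tilde\sigma(s),\sigma_q(s))\le d(\tilde\sigma(s),\sigma_p(s))+d(\sigma_p(s),\sigma_q(s))$, which is bounded; since $s\mapsto d(\tilde\sigma(s),\sigma_q(s))$ is convex on an NPC space and vanishes at $s=0$, it vanishes identically.
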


\begin{proof}
As above, let $T_s$ be the transvection along $\sigma_p$.  By Claim~\ref{ut},   $\tilde u_s(q)=T_s \tilde u_0(q)$ for $s \in [0,1]$.    
  Since 
  \[
  T_s \tilde u_0(q)=(T_\frac{1}{2} \circ T_{s-\frac{1}{2}})  \tilde u_0(q)=T_\frac{1}{2}  \sigma_q(s-\frac{1}{2}),  \ \ \forall s \in [\frac{1}{2}, \frac{3}{2}],
  \]
   the restriction of $s \mapsto T_s  \tilde u_0(q)$  to $[\frac{1}{2},\frac{3}{2}]$ is a geodesic segment.    Using an analogous argument,  we can inductively  show that for any $n \in \N$, the restriction to $[\frac{n}{2}, \frac{n}{2}+\frac{1}{2}]$ of the map $s \mapsto T_s  \tilde u_0(q)=(T_\frac{1}{2} \circ T_{s-\frac{1}{2}} )\tilde u_0(q)$ is a geodesic segment.   
   Thus, we conclude that   $s \mapsto T_s  \tilde u_0(q)$  is a geodesic ray with $T_s\tilde u_0(q)=\sigma_q(s)$ for $s \in [0,1]$.  Since the two geodesic rays $s \mapsto T_s \tilde u_0(q)$ and $s \mapsto \sigma_q(s)$ agree on $[0,1]$, they are the same geodesic ray. Since $T_s$ is an isometry,  
   \[
   d(\sigma_q(s),\sigma_p(s))=d(T_s  \tilde u_0(q), T_s  \tilde u_0(p))=d(\tilde u_0(q),   \tilde u_0(p))=d(\sigma_0(q),\sigma_0(p)).
   \]
\end{proof}

For $Q = \tilde u_0(q)$, let  $\sigma^Q=\sigma_q$.  By Claim~\ref{parallel!}, there exists map from $\tilde u_0(M)$ to a  family of pairwise parallel geodesic lines given by 
\[
Q \mapsto \sigma^Q.
\]
Since  $\sigma^Q=\sigma_q$ and $\sigma^{\rho(\gamma)Q}=\sigma_{\gamma q}$ are  extensions of $\bar \sigma_q$ and $\bar \sigma_{\gamma q}$ and 
\[
\rho(\gamma) \bar \sigma_q(s)=\rho(\gamma) \tilde u_s(q)=\tilde u_s(\gamma q)=\bar \sigma_{\gamma q}(s), \ \forall s \in [0,1],
\]
we have
\[
\rho(\gamma) \sigma^Q = \sigma^{\rho(\gamma)Q}, \ \ \forall \gamma \in \pi_1(M).
\]
Since $\sigma^Q$ and $\sigma^{\rho(\gamma) Q}$ are parallel geodesic rays, we conclude that 
that \[
\rho(\gamma) [\sigma_q]=[\sigma_q], \ \ \ \gamma \in   \pi_1(M).
\]
In other words, $\rho(\pi_1(M))$ fixes a point at infinity, contradicting assumption (i).

\subsection{Euclidean buildings} \label{sec:Euclideanbuildings}

Throughout this subsection, $\tilde X$ is an irreducible locally finite Euclidean building of dimension $n$.
  An open $n$-dimensional simplex of $\tilde X$ will be referred to as a {\it chamber}.  An apartment of $\tilde X$ is a convex isometric embedding of $\R^n$ in $\tilde X$.

Let $\tilde u_s$ be the geodesic interpolation maps defined in \S\ref{sec:gi}.  The  {\it regular set} ${\mathcal R}(\tilde u_s)$ is the set of all points $q \in \tilde \domain$ with the following property:  There exists a neighborhood  ${\mathcal U}_q$  of $q$ such that $\tilde u_s({\mathcal U}_q)$ is contained in an apartment $A_q$  of $\tilde X$.

\begin{theorem}[\cite{gromov-schoen} Theorem 6.4] \label{gs}
The singular set $\mathcal S(\tilde u_s)$, i.e.~the complement of ${\mathcal R}(\tilde u_s)$,  is a closed set of Hausdorff codimension at least 2.  
\end{theorem}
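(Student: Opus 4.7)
The plan is to adapt the Federer--Almgren dimension reduction technique from geometric measure theory to harmonic maps valued in a Euclidean building, following the strategy of Gromov--Schoen. Note first that by Lemma~\ref{twothings} each interpolation map $\tilde u_s$ has the same energy as $\tilde u_0$, so $\tilde u_s$ is itself energy minimizing, and the full harmonic-map toolkit developed in \cite{korevaar-schoen} and \cite{gromov-schoen} applies to $\tilde u_s$.

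First I would set up an Almgren-type monotonicity formula for the order function at each point $p\in\tilde M$,
\[
\mathrm{Ord}_{\tilde u_s}(p)=\lim_{r\to 0}\frac{r\int_{B_r(p)}|d\tilde u_s|^2\,d\mathrm{vol}}{\int_{\partial B_r(p)}d^2\bigl(\tilde u_s,\tilde u_s(p)\bigr)\,d\Sigma}.
\]
Using the NPC quadrilateral inequality on $\tilde X$ and energy minimality, one proves that the ratio on the right is non-decreasing in $r$; hence the limit exists, is $\geq 1$, equals $1$ on the regular set, and is strict at points where the map fails to lie in a single apartment locally.

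Second, I would construct tangent maps. Rescaling the domain by $r_i\to 0$ and the target distance from $\tilde u_s(p)$ by the normalizing factor dictated by the denominator above, the rescaled maps subconverge, in the Korevaar--Schoen sense, to a harmonic map $\tilde u_\ast:\R^n\to T_{\tilde u_s(p)}\tilde X$ into the tangent cone. Crucially, the tangent cone of a Euclidean building is again a Euclidean building, and $\tilde u_\ast$ is homogeneous of degree $\mathrm{Ord}_{\tilde u_s}(p)$ by the equality case of the monotonicity formula. The regular/singular dichotomy passes to $\tilde u_\ast$.

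Third, I would perform dimension reduction. Openness of ${\mathcal R}(\tilde u_s)$ is immediate from the definition, so $\mathcal S(\tilde u_s)$ is closed. Suppose for contradiction $\dim_{\mathcal H}\mathcal S(\tilde u_s)>n-2$; then there is a singular point $p_0$ of positive $(n-1)$-dimensional upper density in $\mathcal S$. A standard translation-invariance argument, applied to the tangent map $\tilde u_\ast$ at such a $p_0$, shows that $\tilde u_\ast$ must split off $k\geq n-1$ Euclidean factors. If $k=n$ then $\tilde u_\ast$ is affine with image in a single apartment, contradicting that $0$ is singular for $\tilde u_\ast$. If $k=n-1$ then $\tilde u_\ast$ descends to a harmonic map $\R\to T_{\tilde u_s(p_0)}\tilde X$, i.e.~a geodesic, which is always regular --- again a contradiction.

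The hardest step is rigorously justifying the existence, homogeneity and splitting structure of tangent maps into Euclidean buildings: one must show that blow-up limits retain the building structure in the target, prove the strict-monotonicity identity that forces homogeneity, and verify that the singular set of a homogeneous map is a cone, so that density points force translation invariance. This is where the main technical work of \cite{gromov-schoen} lies, and it does not follow from NPC geometry alone; the locally finite simplicial hypothesis on $\tilde X$ is used precisely to guarantee that tangent cones of apartments are themselves apartments and that the combinatorial structure survives the scaling limit.
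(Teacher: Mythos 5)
The paper does not prove this statement at all: it is quoted verbatim as Theorem~6.4 of \cite{gromov-schoen}, so your proposal is being measured against the original Gromov--Schoen argument rather than against anything in this paper. Your outline (Almgren monotonicity of the order function, blow-up to homogeneous tangent maps into the tangent cone, Federer--Almgren dimension reduction) is indeed the correct global strategy, and your preliminary observation that $\tilde u_s$ is itself energy minimizing --- so that the minimizing-map machinery applies to it --- is both correct and necessary here.

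There is, however, a genuine gap in where you locate the difficulty. You assert that the order function ``equals $1$ on the regular set, and is strict at points where the map fails to lie in a single apartment locally,'' and your dimension reduction then implicitly bounds only the set $\{\mathrm{Ord}_{\tilde u_s}>1\}$. But the implication ``$\mathrm{Ord}_{\tilde u_s}(p)=1$ with nondegenerate differential $\Rightarrow$ $p$ is a regular point in the sense of (\ref{88})'s ambient definition, i.e.\ a whole neighborhood maps into one apartment'' is not a consequence of monotonicity; it is Theorem~6.3 of \cite{gromov-schoen}, and it is the technical heart of that paper. Its proof requires the notion of an $F$-connected complex, an induction on the dimension of the building via the links of simplices, and the quantitative ``effectively contained in a flat'' estimates; a degree-one homogeneous tangent map into the tangent cone must first be shown to have totally geodesic flat image, and then this infinitesimal information must be propagated to an actual neighborhood. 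Without that step, closing the argument at order-one singular points is impossible, and your concluding paragraph misidentifies the hardest step as the construction of tangent maps. Relatedly, the local finiteness hypothesis enters not merely so that ``tangent cones of apartments are apartments'' but to give local compactness of $\tilde X$ (needed for the compactness arguments producing tangent maps) and to make the inductive structure on links available; this is exactly why Remark~\ref{concludingremarks} of the paper flags the non-locally-finite case as open.
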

 For each $q \in \tilde \domain$, let ${\mathcal R}^*(\tilde u_s)$ be the set of points in ${\mathcal  R}(\tilde u_s)$ such that
\begin{equation} \label{88}
\exists \mbox{ $\epsilon>0$ and a chamber $C^*$ such that $\tilde u_s(B_q(\epsilon)) \subset \bar C^*$.}
\end{equation}   
After identifying $A \simeq \R^n$,  $\tilde u_s|_{\mathcal U_q}$ is a harmonic map into Euclidean space, and it follows that the set ${\mathcal U}_q \backslash {\mathcal R}^*(\tilde u_s)$ is a closed set of codimension at least 1. 
Thus, ${\mathcal R}^*(u_q)$ is a closed set of codimension 1.

For each $q \in \tilde \domain$, let $\bar \sigma_q(s)=\tilde u_s(q)$ (cf.~(\ref{geoseg})) and denote by $R_q$ the set of all points $s \in [0,1]$ such that 
\begin{equation} \label{19}
\exists \epsilon>0 \mbox{ and a chamber $C$ such that } \bar \sigma_q((s-\epsilon,s+\epsilon)) \subset \bar C.
\end{equation}
  The complement of $R_q$ in $[0,1]$ is a finite set.
Thus, the complement of  $\mathcal R^{**}=\{(q,s):  q \in \mathcal R^*(\tilde u_{s}), s \in R_{q}\}$ in $\tilde M \times [0,1]$ is an closed set  of measure  0.

Fix $(q,s) \in \mathcal R^{**}$ and let $C$, $C^*$ be the chamber as in (\ref{88}), (\ref{19}) respectively.  
 Let    $(x^1,\dots,x^n)$ be local coordinates in a neighborhood of $q$ with  coordinate vector fields  $(\partial_1, \dots, \partial_n)$.   
Let $A$ be the apartment containing chambers $C$ and $C^*$ as in (\ref{88}) and (\ref{19}).   
After isometrically identifying  $A$ with $ \R^n$, let   $\langle \cdot, \cdot \rangle$ be the usual inner product defined on $A \simeq \R^n$.    
 Thus, (\ref{pullback})  with $V=\partial_\alpha$,
 implies at $q_0$
\[
s \mapsto \left\langle \frac{\partial \tilde u_s}{\partial x^\alpha}, \frac{\partial \tilde u_s}{\partial x^\alpha}\right\rangle = \mbox{constant in $(s_0-\epsilon, s_0+\epsilon)$}.
\]
We  can differentiate this twice with respect to $s$  to obtain
\begin{eqnarray*}
0 = 
 \frac{\partial^2}{\partial s^2}\left\langle \frac{\partial \tilde u_s}{\partial x^\alpha}, \frac{\partial \tilde u_s}{\partial x^\alpha}\right\rangle 
 & = & 2 \left\langle \frac{\partial}{\partial x^\alpha} \frac{\partial^2 \tilde u_s}{\partial s^2}, \frac{\partial \tilde u_s}{\partial x^\alpha}\right\rangle
 + 2  \left\langle \frac{\partial}{\partial x^\alpha} \frac{\partial \tilde u_s}{\partial s}, \frac{\partial}{\partial x^\alpha} \frac{\partial \tilde u_s}{\partial s}\right\rangle.
\end{eqnarray*}
Since $\bar \sigma_{q_0}$ is a geodesic,   $\frac{\partial^2 \tilde u_s}{\partial s^2}(q_0)=\bar \sigma_{q_0}''(s)=0$.
Thus, 
\[
\frac{\partial \bar \sigma'_q(s)}{\partial x^\alpha}\Big|_{q=q_0,s=s_0}= \frac{\partial}{\partial x^\alpha} \frac{\partial \tilde u_s}{\partial s}(q_0)=0.
\]
Since the choice of $\alpha \in \{1,\dots,n\}$ is arbitrary and  $\mathcal R^*$ is of full measure in $\tilde \domain \times [0,1]$, we conclude that  geodesic segments
$\bar \sigma_p$ and $\bar \sigma_q$  are parallel for any $p,q \in \tilde M$; i.e.
\begin{equation} \label{parallelforbuildings}
d(\bar \sigma_p(s),\bar \sigma_q(s))
=:\delta_{p,q}, \ \ \forall s \in [0,1]
\end{equation}
where $\delta_{p,q} :=d(\sigma_q(0), \sigma_p(0))=d(\tilde u_0(q), \tilde u_0(p))$.

Next note the following:
\begin{itemize}
\item (Existence of a geodesic extension)  Given a geodesic segment, property (2) of  Definition~\ref{def:building} implies that there exists an apartment  containing its endpoints and hence its image.  We can thus extend  the geodesic segment to a geodesic line
 in this apartment.
\item (Non-uniqueness of a geodesic extension) Unlike symmetric spaces, the geodesic extensions  are not necessarily unique in a Euclidean building. Indeed, there may be many apartments containing the endpoints of a given geodesic segment.
\end{itemize}
Because of the non-uniqueness of geodesic extensions,  the proof for the building case is slightly different from the symmetric space case as we see below.

We define the sets 
\begin{equation} \label{Cn}
C_0, C_1, \dots, C_n
\end{equation}
inductively follows.  
First, let $C_0=\tilde u_0(\tilde M)$, and then let $C_n$ be the union of the images of all geodesic segments connecting points of $C_{n-1}$.  
The $\rho(\pi_1(M))$-invariance of $C_0$ implies the $\rho(\pi_1(M))$-invariance of $C_n$.

To each $Q=\tilde u_0(q) \in C_0$,  we assign a geodesic segment $\bar \sigma^Q=\bar \sigma_q$ (cf.~(\ref{geoseg})).  By (\ref{parallelforbuildings}), $\{\bar \sigma^Q\}_{Q \in C_0}$ is a  family of pairwise parallel of geodesic segments.  Since $\tilde u_s$ is $\rho$-equivariant, the assignment $Q \mapsto \bar \sigma^Q$ is  $\rho(\pi_1(M))$-equivariant; i.e.~$
\rho(\gamma) \bar \sigma^Q = \bar \sigma^{\rho(\gamma)Q}$ for any $Q \in C_0$ and  $\gamma \in \rho(\pi_1(M)).
$

  For $n \in \N$,  we inductively  a $\rho(\pi_1(M))$-equivariant map from $C_n$ to a family of pairwise parallel geodesic segments  as follows:  For any pair of points $Q_0, Q_1 \in C_{n-1}$, apply the Flat Quadrilateral Theorem (cf.~\cite[2.11]{bridson-haefliger}) with vertices $Q_0, Q_1, P_1:=\bar \sigma^{Q_1}(1), P_0:=\bar \sigma^{Q_0}(1)$ to define a one-parameter family of parallel geodesic segments $\bar \sigma^{Q_t}:[0,1] \rightarrow \tilde X$ with initial point $Q_t=(1-t)Q_0+tQ_1$ and terminal point $P_t=(1-t)P_0+tP_1$  (cf.~(\ref{interpolationnotation})).  
   The inductive hypothesis implies that  the map $Q \mapsto \bar \sigma^Q$ defined on $C_n$ is also  $\rho(\pi_1(M))$-equivariant.
The above construction defines a $\rho(\pi_1(M))$-equivariant map  
\[
Q \mapsto \bar \sigma^Q
\]
 from $\tilde X$ 
to  a family of pairwise geodesic  segments.  Indeed,  we are assuming that the action of $\rho(\pi_1(M))$ does not fix a non-empty closed convex strict subset of $\tilde X$.  Thus,
\begin{equation} \label{convexhullisX}
\tilde X= \bigcup_{n=0}^{\infty} C_n
\end{equation}
since the right hand side is the convex hull of $C_0 = \tilde u_0(\tilde M)$ and each $C_n$ is invariant under the action of $\rho(\pi_1(M))$.

\begin{claim} \label{extinBdg}
There exists a $\rho(\pi_1(M))$-equivariant map
\[
Q   \mapsto \ \ \sigma^Q:[0,\infty) \rightarrow \tilde X
\]
from $\tilde X$ into a family of  pairwise parallel rays; i.e.~$\rho(\gamma)\sigma^Q=\sigma^{\rho(\gamma)Q}$ for all $Q \in \tilde X$, $\gamma \in \pi_1(M)$ and $
d(\sigma_p(s), \sigma_q(s))= \delta_{p,q}$ for all $s \in [0,\infty)$. \end{claim}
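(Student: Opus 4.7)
The plan is to promote each $\bar\sigma^Q$ to a geodesic ray $\sigma^Q$ by iterating the ``time-$1$ shift'' of the family. Define $\phi:\tilde X \to \tilde X$ by $\phi(Q) := \bar\sigma^Q(1)$. Parallelism of $\{\bar\sigma^Q\}_{Q \in \tilde X}$ (established in the paragraph preceding the claim) gives $d(\phi(Q), \phi(Q')) = d(Q,Q')$, so $\phi$ is distance-preserving. The $\rho$-equivariance $\rho(\gamma)\bar\sigma^Q = \bar\sigma^{\rho(\gamma)Q}$ yields $\phi\circ \rho(\gamma) = \rho(\gamma)\circ \phi$, hence $\phi^n \circ \rho(\gamma) = \rho(\gamma)\circ \phi^n$ for every $n \in \N$, and $d(Q,\phi(Q)) = 1$ for every $Q$.

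For each $Q \in \tilde X$ set
\[
\sigma^Q(s) := \bar\sigma^{\phi^{\lfloor s\rfloor}(Q)}\bigl(s - \lfloor s\rfloor\bigr), \qquad s \in [0,\infty),
\]
so that $\sigma^Q(0)=Q$, $\sigma^Q(n) = \phi^n(Q)$ for $n \in \N$, and on each unit interval $[n, n+1]$ the curve $\sigma^Q$ is the geodesic segment $\bar\sigma^{\phi^n(Q)}$. The main point is to verify that these pieces concatenate into a single geodesic; for this I will show $d(\phi^{n-1}(Q), \phi^{n+1}(Q)) = 2$ for each $n \geq 1$. Let $\sigma := \bar\sigma^{\phi^{n-1}(Q)}$ and $\tau := \bar\sigma^{\phi^n(Q)}$. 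Since $d(\phi^{n-1}(Q), \phi^n(Q)) = 1$, parallelism of the family gives $d(\sigma(t), \tau(t)) = 1$ for all $t \in [0,1]$. Applying the standard CAT$(0)$ convexity of the distance between two geodesics to $\sigma$ and the reversed parametrization $t \mapsto \tau(1-t)$ (from $\phi^{n+1}(Q)$ to $\phi^n(Q)$),
\[
1 \,=\, d\bigl(\sigma(\tfrac12), \tau(\tfrac12)\bigr) \,\leq\, \tfrac12 d(\sigma(0), \tau(1)) + \tfrac12 d(\sigma(1), \tau(0)) \,=\, \tfrac12 d\bigl(\phi^{n-1}(Q), \phi^{n+1}(Q)\bigr),
\]
so $d(\phi^{n-1}(Q), \phi^{n+1}(Q)) \geq 2$. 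The reverse inequality is the triangle inequality, so equality holds; hence $\phi^n(Q)$ lies on the geodesic from $\phi^{n-1}(Q)$ to $\phi^{n+1}(Q)$, and $\sigma^Q$ is a genuine geodesic ray.

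Finally, the two asserted properties follow routinely. For parallelism, fix $p, q \in \tilde M$ with $P = \tilde u_0(p)$, $Q = \tilde u_0(q)$, and $s \in [n, n+1]$; parallelism of $\bar\sigma^{\phi^n(P)}$ and $\bar\sigma^{\phi^n(Q)}$ together with the distance-preserving property of $\phi^n$ yields
\[
d(\sigma^P(s), \sigma^Q(s)) \,=\, d(\phi^n(P), \phi^n(Q)) \,=\, d(P, Q) \,=\, \delta_{p, q}.
\]
For equivariance, $\phi^n \circ \rho(\gamma) = \rho(\gamma) \circ \phi^n$ and $\rho(\gamma) \bar\sigma^R = \bar\sigma^{\rho(\gamma)R}$ combine to give $\sigma^{\rho(\gamma)Q}(s) = \rho(\gamma)\sigma^Q(s)$ on each $[n, n+1]$ and hence on $[0,\infty)$.

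The main obstacle is the geodesic concatenation step. As emphasized above, geodesic extensions in a Euclidean building are not unique, so it is not a priori clear that the iterates $\phi^n(Q)$ should all lie on one geodesic line; the parallelism of the segments, fed through the CAT$(0)$ convexity of the distance between two geodesics, is what pins down this alignment.
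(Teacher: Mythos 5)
Your proof is correct and takes essentially the same approach as the paper: both build $\sigma^Q$ by iterating the segment assignment $Q \mapsto \bar\sigma^Q$ and chaining the resulting parallel unit segments into a ray (the paper shifts by $3/4$ and takes overlapping unions $L^Q=\bigcup_i \bar\sigma^{Q_i}([0,1])$, whereas you concatenate end to end via $\phi(Q)=\bar\sigma^Q(1)$). Your convexity computation showing $d(\phi^{n-1}(Q),\phi^{n+1}(Q))=2$ explicitly verifies the alignment of consecutive segments, a step the paper's proof leaves implicit.
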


\begin{proof}
For $Q \in \tilde X$, we inductively construct a sequence $\{Q_i\}$ of points in $\tilde X$ by first setting $Q_0=Q$ and then defining $Q_i= \bar \sigma^{Q_{i-1}}(\frac{3}{4})$.
Next, let 
\[
L^Q=\bigcup_{i=0}^\infty I^{Q_i}
\]
where $I^{Q_i}=\bar \sigma^{Q_i}([0,1])$. Therefore, $L^Q$ is  a  union of pairwise parallel geodesic segments.  Thus,  $\{L^Q\}_{Q \in \tilde X}$ is a family of  pairwise parallel geodesic rays. Moreover,  the $\rho(\pi_1(M))$-equivariance of the map  $Q \mapsto \bar \sigma^Q$ implies 
$\rho(\gamma)\bar \sigma^{Q_{i-1}}(\frac{3}{4})=\bar \sigma^{\rho(\gamma)Q_{i-1}}(\frac{3}{4})$.  Thus, if $\{Q_i\}$ is the sequence constructed starting with $Q_0=Q$, then $\{\rho(\gamma)Q_i\}$ is the sequence constructed starting  with $\rho(\gamma)Q_0= \rho(\gamma)Q$.  We thus conclude 
\[
\rho(\gamma)L^Q=\bigcup_{i=-\infty}^{\infty} \rho(\gamma) I^{Q_i} = \bigcup_{i=-\infty}^{\infty} I^{\rho(\gamma)Q_i}=L^{\rho(\gamma)Q}.
\]  
We are done by letting the geodesic ray $\sigma^Q:[0,\infty) \rightarrow \tilde X$ be the extension of the geodesic segment $\bar \sigma^Q:[0,1] \rightarrow \tilde X$ parameterizing $L^Q$.
\end{proof}

Claim~\ref{extinBdg} implies that $\rho(\pi_1(M))$ fixes the point $[\sigma^Q]$ at infinity. This  contradicts assumption (i) and completes the proof.

\begin{remark}[Generalization to thick Euclidean buildings] \label{concludingremarks}

Conjecturally, a harmonic map into a
thick Euclidean building  has locally finite image and the Gromov-Schoen regularity result holds in this case. If this is the case, then 
Theorem~\ref{existunique} holds  when $\tilde X$ is assumed to be a thick  Euclidean building.  Indeed, the only place where the assumption that the Euclidean building is locally finite is used is in the application of Theorem~\ref{gs}.  (More precisely,  the target space is  assumed to be locally finite in the regularity result of Gromov-Schoen ~\cite[Theorem 6.4]{gromov-schoen}.)
  The rest of the proof of Theorem~\ref{existunique} goes through exactly as in the locally finite Euclidean building case. 
  \end{remark}

\end{document}